\theoremstyle{plain}
\newtheorem{theorem}{Theorem}[section]
\newtheorem{lemma}[theorem]{Lemma}
\newtheorem{proposition}[theorem]{Proposition}
\newtheorem{cor}[theorem]{Corollary}
\theoremstyle{definition}
\newtheorem{definition}[theorem]{Definition}
\newtheorem{example}[theorem]{Example}
\theoremstyle{remark}
\newtheorem{remark}[theorem]{Remark}
\newcommand{\N}{\mathbb{N}}
\DeclareMathOperator{\Ob}{ob}
\DeclareMathOperator{\Mor}{mor}
\title[Graded von Neumann regularity]{Graded von Neumann regularity \\ of rings graded by semigroups}
\date{\today}
\begin{document}

\author{Daniel L\"{a}nnstr\"{o}m, Johan \"{O}inert}
\address{
Department of Mathematics and Natural Sciences,
Blekinge Institute of Technology,
SE-37179 Karlskrona, Sweden}
\email{daniel.lannstrom@bth.se} \email{johan.oinert@bth.se}

\subjclass[2020]{16W50, 16E50}
\keywords{graded ring, regular ring, von Neumann regular ring, semigroup, groupoid}

\begin{abstract}
In this article, we give a complete characterization of semigroup graded rings which are graded von Neumann regular.
We also demonstrate our results by applying them to several classes of examples, including matrix rings and groupoid graded rings.
\end{abstract}

\maketitle


\section{Introduction}

In 1936, von Neumann \cite{vonNeumann1936} introduced the class of \emph{regular rings} which nowadays are known as \emph{von Neumann regular rings}.
Recall that a ring $R$ is \emph{von Neumann regular}
if $r\in rRr$ for every $r\in R$.
Note that any division ring, for instance, is von Neumann regular. On the other hand, the ring of integers $\mathbb{Z}$ is not von Neumann regular.
For a thorough account of the theory of von Neumann regular rings, we refer the reader to \cite{GoodearlBook1991}.

Let $S$ be a semigroup. A ring $R$ is said to be \emph{$S$-graded} (or \emph{graded by $S$})
if there is a collection of additive subgroups $\{ R_s \}_{s \in S}$ of $R$ such that $R=\bigoplus_{s \in S} R_s$ and $R_s R_t \subseteq R_{st}$ for all $s,t \in S$. 
If $G$ is a group, then a $G$-graded ring $R$ is said to be \emph{graded von Neumann regular} if, for all $g \in G$ and $r \in R_g$, the relation $r \in rRr$ holds.
This notion was introduced by N\u{a}st\u{a}sescu and van Oystaeyen who also gave a characterization of graded von Neumann regular rings in the class of unital strongly group graded rings (see \cite[Cor.~C.I.5.3]{nastasescu1982}).
In \cite{Lannstrom2020}, the first named author of this article gave a characterization of (non-unital) group graded rings which are graded von Neumann regular.
The purpose of this article is to generalize that result to the setting of rings graded by 
semigroups. 

Recall that a semigroup $S$ is said to be \emph{regular} if $Q(s):=\{x\in S \mid s=sxs \}$ is nonempty for every $s\in S$.
One can show that $Q(s)$ is nonempty for every $s\in S$ if and only if $V(s):=\{x\in S \mid s=sxs \text{ and } x=xsx \}$ is nonempty for every $s\in S$.
The class of regular semigroups contains two subclasses which are of great importance; the inverse semigroups and the completely regular semigroups.
Now, we introduce the notion of a graded von Neumann regular ring in the context of semigroup graded rings (cf. Definition~\ref{def:vNregularGroupoid}).

\begin{definition}\label{def:grVNsemigroup}
Let $S$ be a semigroup.
An $S$-graded ring $R$ is said to be \emph{graded von Neumann regular}
if for all $s\in S$, $r \in R_s$ and $t\in V(s)$,
there is some $y\in R_t$
such that
$r = ryr$.
\end{definition}

Note that if $S$ is a group, then the above definition coincides with the one used by N\u{a}st\u{a}sescu and van Oystaeyen \cite{nastasescu1982}.

Throughout this article, all rings are assumed to be associative, but not necessarily commutative nor unital.
Here is an outline of this article.

In Section~\ref{Sec:NearlyEpsStrongGradings}, we introduce and record characterizations of epsilon-strong, respectively nearly epsilon-strong, semigroup gradings.
In Section~\ref{Sec:MainResult}, we prove the main results of this article (see Theorem~\ref{thm:Main} and Theorem~\ref{thm:InvSemiGroup}).
In Section~\ref{Sec:Examples}, we apply our results to several different classes of examples. Notably, we obtain a complete characterization of groupoid graded rings which are graded von Neumann regular in the sense of Definition~\ref{def:vNregularGroupoid} (see Theorem~\ref{thm:Groupoid}).

\section{Nearly epsilon-strong semigroup gradings}\label{Sec:NearlyEpsStrongGradings}

In this section, we introduce epsilon-strong, respectively nearly epsilon-strong, gradings in the context of semigroup graded rings. We also characterize such gradings (see Propositions~\ref{prop:EpsStrEquiv} and \ref{prop:NearlyEpsEquiv}).
Throughout this section, we let $S$ denote an arbitrary semigroup.

\begin{remark}
Let $R$ be an $S$-graded ring.

(a) For each $e\in E(S)=\{s\in S \mid s^2=s\}$,
$R_e$ is a subring of $R$.

(b) For all $s\in S$ and $t\in V(s)$,
$R_s$ is an $R_{st}$-$R_{ts}$-bimodule.

(c)
For all $s\in S$ and $t\in V(s)$,
$R_s R_{t}$ is an ideal of $R_{st}$,
and $R_t R_s$ is an ideal of $R_{ts}$.
\end{remark}

Recall that a ring $R$ is said to be \emph{left $s$-unital} (resp. \emph{right $s$-unital}) if $x \in Rx$ (resp. $x \in xR$) for every $x\in R$. A ring which is both left $s$-unital and right $s$-unital is said to be \emph{$s$-unital}.
A ring $R$ is said to be \emph{left unital} (resp. \emph{right unital}) if it has a left unity element (resp. a right unity element),
i.e. if there is an element $u \in R$ such that $ur=r$ (resp. $ru=r$) for every $r\in R$.

\emph{Epsilon-strongly} and \emph{nearly epsilon-strongly} group graded rings were introduced and studied in \cite{NOP2018} and \cite{NystedtOinert2020}, respectively. Here we introduce the corresponding notions for semigroup gradings.

\begin{definition}
Let $R$ be an \emph{$S$-graded} ring.
\begin{enumerate}[{\rm (i)}]
	\item If $R_s = R_s R_t R_s$ for all $s\in S$ and $t\in V(s)$, then $R$ is said to be \emph{symmetrically $S$-graded}.

	\item If $R_s R_t = R_{st}$ for all $s,t \in S$, then $R$ is said to be \emph{strongly $S$-graded}.
	
	\item If $R$ is symmetrically $S$-graded, and 
	$R_s R_t$ is unital
	for all $s\in S$ and $t\in V(s)$, then $R$ is said to be \emph{epsilon-strongly $S$-graded}.
	
	\item If $R$ is symmetrically $S$-graded, and 
	$R_s R_t$ is $s$-unital 
	for all $s\in S$ and $t\in V(s)$, then $R$ is said to be \emph{nearly epsilon-strongly $S$-graded}.
\end{enumerate}
\end{definition}

\begin{remark}
When specializing the above definition to the case where $S$ is a group, one recovers the definitions from \cite{NystedtOinert2020,NOP2018}.
\end{remark}

\begin{proposition}\label{prop:EpsStrEquiv}
Let $R$ be an $S$-graded ring.
The following two assertions are equivalent:
\begin{enumerate}[{\rm (i)}]
	\item $R$ is epsilon-strongly $S$-graded;
	\item For all $s\in S$ and $t\in V(s)$, there exist $\epsilon_{s,t} \in R_s R_t$ and $\epsilon_{t,s}' \in R_t R_s$ such that the equalities
$\epsilon_{s,t} r = r = r \epsilon_{t,s}'$ hold for every $r \in R_s$.
\end{enumerate}
\end{proposition}

\begin{proof}
(i)$\Rightarrow$(ii):
Suppose that $R$ is epsilon-strongly $S$-graded.
Take $s\in S$, $t\in V(s)$ and $r\in R_s$.
By assumption, $R_s R_t$ is unital. 
Note that $s \in V(t)$, and hence $R_t R_s$ is also unital.
Let $\epsilon_{s,t}$ and $\epsilon_{t,s}'$ denote the unity elements of $R_s R_t$ and $R_t R_s$, respectively.
Using that $R$ is symmetrically $S$-graded, there exist $n\in \N$, $a_i,c_i \in R_s$ and $b_i \in R_t$, for $i\in \{1,\ldots,n\}$, such that
$r=\sum_{i=1}^n a_i b_i c_i$.
Take $i\in \{1,\ldots,n\}$.
Since $a_i b_i \in R_s R_t$ we get that
$\epsilon_{s,t} a_i b_i = a_i b_i$ and thus
$\epsilon_{s,t} r = \sum_{i=1}^n \epsilon_{s,t} a_i b_i c_i = \sum_{i=1}^n a_i b_i c_i = r$.
Similarly, $r \epsilon_{t,s}' = r$.

(ii)$\Rightarrow$(i):
Suppose that (ii) holds.
Take $s\in S$ and $t\in V(s)$.
It is clear that $R_s R_t R_s \subseteq R_{sts} = R_s$.
Now, we show the reversed inclusion.
Take $r \in R_s$.
By assumption, $\epsilon_{s,t} \in R_s R_t$ and $\epsilon_{s,t} r = r$.
In particular, it follows that
$r = \epsilon_{s,t} r \in R_s R_t R_s$.
This shows that $R$ is symmetrically $S$-graded.
Clearly, $\epsilon_{s,t}$ is a left unity element of $R_s R_t$.
Note that $s\in V(t)$. Hence, there is an element $\epsilon_{s,t}' \in R_s R_t$ such that $r' \epsilon_{s,t}' = r'$ for every $r'\in R_t$.
This means that $\epsilon_{s,t}'$ is a right unity element of $R_s R_t$.
In fact,
$\epsilon_{s,t} = \epsilon_{s,t} \epsilon_{s,t}' = \epsilon_{s,t}'$ is the unity element of $R_s R_t$.
\end{proof}

\begin{cor}
If $R$ is an epsilon-strongly $S$-graded ring, then $R_e$ is unital for every $e\in E(S)$.
\end{cor}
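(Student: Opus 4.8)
The plan is to reduce the statement to Proposition~\ref{prop:EpsStrEquiv} by first observing that every idempotent is its own ``inverse'' in the semigroup-theoretic sense. Concretely, if $e\in E(S)$, so that $e^2=e$, then $e=e\cdot e\cdot e$ (since $e^3=e$) and likewise $e=e\cdot e\cdot e$, so both defining relations of $V(e)$ are satisfied by $x=e$; hence $e\in V(e)$. This is the only point that requires a moment's care, and I regard it as the (mild) crux of the argument: it is what allows the defining condition of an epsilon-strong grading to be applied to the pair $(s,t)=(e,e)$.

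Next I would identify the component product $R_eR_e$ with $R_e$ itself. Since $R$ is in particular symmetrically $S$-graded, applying the definition with $s=t=e$ gives $R_e = R_eR_eR_e$. On the other hand $R_eR_e\subseteq R_{e^2}=R_e$, and therefore
\[
R_e = R_eR_eR_e \subseteq R_eR_e \subseteq R_e,
\]
which forces $R_eR_e = R_e$ as subsets of $R$ (with the inherited multiplication).

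Finally I would invoke the epsilon-strong hypothesis with $s=t=e$: the ring $R_eR_e$ is unital, and by the previous step this is exactly $R_e$, so $R_e$ is unital, as claimed. Alternatively, one can appeal directly to Proposition~\ref{prop:EpsStrEquiv}(ii), which produces $\epsilon_{e,e}\in R_eR_e=R_e$ and $\epsilon_{e,e}'\in R_eR_e=R_e$ acting respectively as a left and a right identity on $R_e$; then the standard one-line computation $\epsilon_{e,e}=\epsilon_{e,e}\epsilon_{e,e}'=\epsilon_{e,e}'$ shows these two elements coincide and constitute the unity of $R_e$. I do not anticipate any further obstacle: beyond checking $e\in V(e)$, every step is immediate from $e^2=e$ together with the symmetric grading axiom.
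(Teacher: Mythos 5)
Your proposal is correct. Your stated ``alternative'' at the end is exactly the paper's proof: apply Proposition~\ref{prop:EpsStrEquiv} with $s=t=e$ (using $e\in V(e)$) to get a left identity $\epsilon_{e,e}$ and a right identity $\epsilon_{e,e}'$ on $R_e$, then observe $\epsilon_{e,e}=\epsilon_{e,e}\epsilon_{e,e}'=\epsilon_{e,e}'$. Your primary route is a mild but genuine variant that bypasses Proposition~\ref{prop:EpsStrEquiv} entirely: from the symmetric grading you deduce $R_e=R_eR_eR_e\subseteq R_eR_e\subseteq R_e$, hence $R_eR_e=R_e$, and then unitality of $R_e$ is literally the defining condition that $R_eR_e$ is unital. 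This is arguably cleaner, since it never needs to check that the one-sided identities act on all of $R_e$ rather than just on $R_eR_e$; what the paper's route buys is that it reuses a proposition already established and records the explicit unity element $\epsilon_{e,e}$. Your verification that $e\in V(e)$ (both relations reduce to $e=e^3$, which follows from $e^2=e$) is the same observation the paper makes, stated there without proof.
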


\begin{proof}
Suppose that $R$ is an epsilon-strongly $S$-graded ring.
Take $e\in E(S)$. Note that $e\in V(e)$. There is some $\epsilon_{e,e} \in R_e R_e$ 
and $\epsilon_{e,e}' \in R_e R_e$ such that $\epsilon_{e,e} r = r$ and $r\epsilon_{e,e}' = r$ for every $r\in R_e$.
In particular, $\epsilon_{e,e} = \epsilon_{e,e} \epsilon_{e,e}' = \epsilon_{e,e}'$.
\end{proof}

\begin{proposition}[{Nystedt \cite[Prop.~2.11]{Nystedt2019}}, Tominaga \cite{Tominaga}]\label{Prop:Tominaga}
A ring $R$ is left $s$-unital (resp. right $s$-unital) if and only if for any
finite subset $V$ of $R$ there is $u \in R$ such that for every $v \in V$ the equality $uv = v$ (resp. $vu=v$) holds.
\end{proposition}

\begin{proposition}\label{prop:NearlyEpsEquiv}
Let $R$ be an $S$-graded ring.
The following two assertions are equivalent:
\begin{enumerate}[{\rm (i)}]
	\item $R$ is nearly epsilon-strongly $S$-graded;
	\item For all $s\in S$, $t\in V(s)$ and $r\in R_s$, there exist $\epsilon_{s,t}(r) \in R_s R_t$ and $\epsilon_{t,s}'(r) \in R_t R_s$ such that  the equalities
$\epsilon_{s,t}(r) r = r = r \epsilon_{t,s}'(r)$ hold.
\end{enumerate}
\end{proposition}

\begin{proof}
(i)$\Rightarrow$(ii):
Suppose that $R$ is nearly epsilon-strongly $S$-graded. 
Take $s\in S$, $t\in V(s)$ and $r\in R_s$.
By assumption, $R_s R_t$ is $s$-unital.
Note that $s\in V(t)$, and hence $R_t R_s$ is also $s$-unital.
Using that $R$ is symmetrically $S$-graded, 
there exist $n\in \N$, $a_i,c_i \in R_s$ and $b_i \in R_t$, for $i\in \{1,\ldots,n\}$, such that
$r=\sum_{i=1}^n a_i b_i c_i$.
Since $a_i b_i \in R_s R_t$ and $b_i c_i \in R_t R_s$ for every $i\in \{1,\ldots,n\}$, 
from 
Proposition~\ref{Prop:Tominaga},
it follows  that there exist $\epsilon_{s,t}(r) \in R_s R_t$ and $\epsilon_{t,s}'(r) \in R_t R_s$ such that the equalities $\epsilon_{s,t}(r) a_i b_i = a_i b_i$ and $b_i c_i \epsilon_{t,s}'(r) = b_i c_i$ hold for every $i\in \{1,\ldots,n\}$.
Thus, $\epsilon_{s,t}(r) r = r = r \epsilon_{t,s}'(r)$.

(ii)$\Rightarrow$(i):
Suppose that (ii) holds.
Symmetry of the $S$-grading is shown in the same way as for Proposition~\ref{prop:EpsStrEquiv}.
Take $s\in S$, $t\in V(s)$ and $r\in R_s$.
By assumption, there is $\epsilon_{s,t}(r) \in R_s R_t$ such that $\epsilon_{s,t}(r) r = r$.
Thus, $R_s R_t$ is left $s$-unital.
Note that $s \in V(t)$. Take $r'\in R_t$.
By assumption, there is $\epsilon_{s,t}'(r') \in R_s R_t$ such that $r' \epsilon_{s,t}'(r') = r'$.
Thus, $R_s R_t$ is right $s$-unital.
We conclude that $R_s R_t$ is $s$-unital.
\end{proof}

\section{Graded von Neumann regularity}\label{Sec:MainResult}

In this section, we establish the main results of this article (see Theorem~\ref{thm:Main} and Theorem~\ref{thm:InvSemiGroup}).
Throughout this section, unless stated otherwise, $S$ is assumed to be an arbitrary semigroup.
Recall that $E(S)$ denotes the set of idempotents in $S$.

\begin{lemma}\label{lem:necessary}
If $R$ is an $S$-graded ring which is graded von Neumann regular, then the following two assertions hold:
\begin{enumerate}[{\rm (i)}]
	\item $R_e$ is von Neumann regular for every $e\in E(S)$;
	\item $R$ is nearly epsilon-strongly $S$-graded.
\end{enumerate}
\end{lemma}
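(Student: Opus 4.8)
The plan is to fix $s \in S$, $t \in V(s)$, and $r \in R_s$, and extract a witness $y \in R_t$ with $r = ryr$ from the hypothesis of graded von Neumann regularity. To prove (i), specialize to $e \in E(S)$: since $e \in V(e)$, for every $r \in R_e$ graded von Neumann regularity produces $y \in R_e$ with $r = ryr$, which is exactly the statement that $R_e$ is von Neumann regular (note $R_e$ is a subring of $R$ by the earlier remark). This part should be immediate.

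For (ii), in view of Proposition~\ref{prop:NearlyEpsEquiv} it suffices to show that for all $s \in S$, $t \in V(s)$ and $r \in R_s$ there exist $\epsilon_{s,t}(r) \in R_s R_t$ and $\epsilon_{t,s}'(r) \in R_t R_s$ with $\epsilon_{s,t}(r)\, r = r = r\, \epsilon_{t,s}'(r)$. Given such $r$, apply graded von Neumann regularity to obtain $y \in R_t$ with $r = ryr$. Then set $\epsilon_{s,t}(r) := ry$ and $\epsilon_{t,s}'(r) := yr$. Since $r \in R_s$ and $y \in R_t$ we have $ry \in R_s R_t$ and $yr \in R_t R_s$, and the identity $r = ryr = (ry)r = r(yr)$ gives precisely the required equalities. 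By Proposition~\ref{prop:NearlyEpsEquiv}, $R$ is nearly epsilon-strongly $S$-graded.

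I do not expect any serious obstacle here: the argument is essentially a direct translation of the hypothesis through the characterization in Proposition~\ref{prop:NearlyEpsEquiv}, and the only point requiring care is the bookkeeping of graded degrees — checking that $ry$ and $yr$ land in $R_s R_t$ and $R_t R_s$ respectively, and that the products $R_s R_t \subseteq R_{st}$, $R_t R_s \subseteq R_{ts}$ are compatible with the statement (which they are, since $r$ itself lies in $R_s = R_{sts} \subseteq R_s R_t R_s$ once symmetry is established, but we do not even need that here). One subtlety worth a remark is that the witnesses $\epsilon_{s,t}(r)$ and $\epsilon_{t,s}'(r)$ depend on $r$, which is why (ii) gives the \emph{nearly} epsilon-strong condition rather than the epsilon-strong one; upgrading to a uniform unity element would require additional hypotheses and is not claimed.
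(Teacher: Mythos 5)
Your proposal is correct and follows essentially the same argument as the paper: part (i) is the direct specialization to $e\in V(e)$, and part (ii) sets $\epsilon_{s,t}(r):=ry$ and $\epsilon_{t,s}'(r):=yr$ from a witness $y\in R_t$ with $r=ryr$ and invokes Proposition~\ref{prop:NearlyEpsEquiv}. No gaps.
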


\begin{proof}
Suppose that $R$ is $S$-graded and graded von Neumann regular.

(i)
Take $e\in E(S)$ and $r\in R_e$.
Using that $e=e^3$, by graded von Neumann regularity,
there is some $y \in R_e$ such that
$r=ryr$. Thus, $R_e$ is von Neumann regular.

(ii)
Take $s\in S$, $t\in V(s)$ and $r\in R_s$.
Using that $S$ is graded von Neumann regular, there is some $y \in R_t$
such that $r=ryr$. Put $\epsilon_{s,t}(r):= ry \in R_s R_t$ and $\epsilon_{t,s}'(r) := yr \in R_t R_s$.
Clearly,
$\epsilon_{s,t}(r) r = ryr =r$
and $r \epsilon_{t,s}'(r) = ryr = r$.
By Proposition~\ref{prop:NearlyEpsEquiv} we conclude that $R$ is nearly epsilon-strongly $S$-graded.
\end{proof}

We recall the following well-known characterization of von Neumann regularity. For a proof, see e.g. \cite[Proposition 2.2]{Lannstrom2020}.

\begin{proposition}\label{prop:vNreg}
Let $T$ be an $s$-unital ring. The following three assertions are equivalent:
\begin{enumerate}[{\rm (i)}]
    \item $T$ is von Neumann regular;
    \item Every principal left (resp. right) ideal of $T$ is generated by an idempotent;
    \item Every finitely generated left (resp. right) ideal of $T$ is generated by an idempotent.
\end{enumerate}
\end{proposition}

\begin{lemma}\label{lemma:technical}
Let $R$ be a nearly epsilon-strongly $S$-graded ring and suppose that $R_e$ is von Neumann regular for every $e\in E(S)$.
Then, for all $s\in S$, $t\in V(s)$ and $r\in R_s$, the left $R_{ts}$-ideal $R_t r$ is generated by an idempotent in $R_{ts}$.
\end{lemma}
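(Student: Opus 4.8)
The plan is to reduce the statement to the known module-theoretic characterization of von Neumann regularity over the idempotent component $R_{ts}$. First I would observe that $e := ts \in E(S)$, so $R_e = R_{ts}$ is a von Neumann regular ring by hypothesis. Also $s \in V(t)$, so by Remark (b) the set $R_s$ is a left $R_{ts}$-module (indeed $R_{ts}R_s \subseteq R_{(ts)s} = R_s$), and since $R$ is nearly epsilon-strongly $S$-graded, $R_tR_s$ is $s$-unital (in particular left $s$-unital) as an ideal of $R_{ts} = R_e$. The key point is that $R_{ts}$ itself need not be $s$-unital, so to apply Proposition~\ref{prop:vNreg} I want to work inside a suitable $s$-unital subring or just argue directly: the claim is that $R_t r$ is a principal left ideal of $R_{ts}$ generated by an idempotent.

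The main obstacle is the possible non-$s$-unitality of $R_{ts}$, which is why one cannot invoke Proposition~\ref{prop:vNreg} blindly. I would circumvent this as follows. Fix $s \in S$, $t \in V(s)$, $r \in R_s$, and set $e = ts$. By Proposition~\ref{prop:NearlyEpsEquiv} there is $\epsilon_{t,s}'(r) \in R_tR_s \subseteq R_{ts} = R_e$ with $r\,\epsilon_{t,s}'(r) = r$; write $u := \epsilon_{t,s}'(r)$. Then $R_t r \subseteq R_t r u \subseteq R_t R_e u = (R_tR_s)\cdots$ — more carefully, since $R_tr \subseteq R_{ts}$ and $u \in R_e$ with $r = ru$, every element $ar$ of $R_tr$ (for $a \in R_t$) satisfies $ar = (ar)u$, so $R_t r = (R_t r)u \subseteq R_e u$. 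Hence $R_t r$ is a left ideal of $R_e$ contained in $R_e u$, and in fact $R_t r$ is finitely generated as a left $R_e$-ideal: using symmetry of the grading, write $r = \sum_{i=1}^n a_i b_i c_i$ with $a_i, c_i \in R_s$, $b_i \in R_t$, so $b_i c_i \in R_tR_s = R_e$ and $R_t r \subseteq \sum_i R_t a_i b_i c_i \subseteq \sum_i R_e (b_ic_i)$; conversely one checks $R_t r = R_e(\epsilon_{t,s}'(r))$-type containments give that $R_t r$ is generated over $R_e$ by the finitely many elements $b_1c_1 u', \ldots$ — the cleanest route is simply: $R_t r = R_t r u$ and $R_tru \subseteq R_{ts}R_{st}R_s \subseteq R_e$, and $R_t r \supseteq (R_tR_s)(R_tR_s)\cdots$; I would instead show $R_t r$ equals the left $R_e$-ideal generated by $\{\, (b_ic_i)\,u : 1 \le i \le n\,\}$, which is finitely generated.

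More directly, the cleanest argument: let $I = R_tr$, a left $R_{ts}$-submodule of $R_{ts}$ (a left ideal). Put $u = \epsilon_{t,s}'(r) \in R_tR_s \subseteq R_{ts}$; then $Iu = I$ since $xu = x$ for all $x \in I$ (because $ar \cdot u = a(ru) = ar$). Also, writing $r = \sum a_ib_ic_i$ as above, every element of $I = R_tr$ lies in $\sum_i R_t a_i b_i c_i \subseteq \sum_i R_{ts}(b_ic_i)$, and since $(b_ic_i) = (b_ic_i)u \in I$ (as $b_ic_i \in R_tR_s$ and $r = ru$... adjust: need $b_ic_i \in I$, which holds because $b_ic_i = b_i(c_iu')$ for a right $s$-unit — or simply absorb into the finitely-generated claim), we get that $I$ is a finitely generated left ideal of the subring $R_{ts}$. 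Now restrict to the $s$-unital subring $T := R_{ts}u R_{ts}$ or note that $u \in R_tR_s$ which is $s$-unital; the subring $eRe$-analogue $R_tR_s$ is $s$-unital and von Neumann regular (as an $s$-unital ideal of the von Neumann regular ring $R_e$, by Proposition~\ref{prop:vNreg} applied suitably), and $I$ is a finitely generated left ideal of $R_tR_s$ contained in $R_tR_s$. By Proposition~\ref{prop:vNreg}(iii), $I$ is generated by an idempotent $\varepsilon \in R_tR_s \subseteq R_{ts}$. Finally, since $\varepsilon \in I = R_tr \subseteq R_{ts}$ and $I = R_{ts}\varepsilon$ (the idempotent generates it as a left ideal of $R_tR_s$, hence a fortiori $R_{ts}\varepsilon \supseteq I$, and $\varepsilon \in I \Rightarrow R_{ts}\varepsilon \subseteq R_{ts}I \subseteq I$ using $I u = I$), we conclude $R_t r = R_{ts}\varepsilon$ with $\varepsilon^2 = \varepsilon \in R_{ts}$, as desired. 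The one delicate bookkeeping step, which I would write out carefully, is verifying that $R_tR_s$ (equivalently its relevant finitely generated ideals) is $s$-unital and von Neumann regular so that Proposition~\ref{prop:vNreg} genuinely applies, and then transferring the idempotent generator statement from $R_tR_s$ back up to $R_{ts}$.
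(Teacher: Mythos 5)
Your overall strategy---exhibit $R_t r$ as a finitely generated left ideal of the von Neumann regular ring $R_{ts}$ and invoke Proposition~\ref{prop:vNreg}---is the same as the paper's, but the finite-generation step has a genuine gap, and it is exactly the step you flag as needing adjustment. From the symmetric decomposition $r=\sum_{i=1}^n a_ib_ic_i$ you only obtain the \emph{inclusion} $R_t r\subseteq\sum_i R_{ts}(b_ic_i)$; this does not make $R_t r$ finitely generated (a left ideal contained in a finitely generated one need not itself be finitely generated, even over a unital von Neumann regular ring such as an infinite product of fields). Moreover the proposed generators $b_ic_i$ lie in $R_tR_s$ but are never shown to lie in $R_t r$, and your attempted fixes do not place them there: $u=\epsilon_{t,s}'(r)$ is a right local unit for $r$, not for $b_ic_i$, so the claim $(b_ic_i)u=b_ic_i$ is unjustified, and $b_ic_i=b_i(c_iu')$ only returns $b_ic_i\in R_tR_s$ again. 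The repair, which is what the paper does, is to decompose the \emph{left} local unit rather than $r$ itself: take $y=\epsilon_{s,t}(r)=\sum_{i=1}^k a_ib_i\in R_sR_t$ with $yr=r$; then for $d=r'r$ with $r'\in R_t$ one has $d=r'yr=\sum_i(r'a_i)(b_ir)$ with $r'a_i\in R_tR_s\subseteq R_{ts}$ and $c_i:=b_ir\in R_t r$, so that $R_t r=\sum_i R_{ts}c_i$ on the nose, both inclusions being immediate.

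Your ``main obstacle''---the possible failure of $s$-unitality of $R_{ts}$---is illusory. Every von Neumann regular ring is automatically $s$-unital (from $x=xyx$ one gets $x=(xy)x\in Tx$ and $x=x(yx)\in xT$), and $R_{ts}=R_e$ with $e=ts\in E(S)$ is von Neumann regular by hypothesis. Hence Proposition~\ref{prop:vNreg} applies directly to $R_{ts}$, and the detour through the ideal $R_tR_s$ (proving it is $s$-unital and von Neumann regular, and then transferring the idempotent generator back up to $R_{ts}$) is unnecessary, although the transfer argument you sketch for that last step would be correct if it were needed.
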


\begin{proof}
Take $s\in S$, $t\in V(s)$ and $r\in R_s$.
Put $I := R_t r$.
Immediately from the grading, it follows that $R_t r \subseteq R_t R_s \subseteq R_{ts}$,
and that $R_{ts} R_t r \subseteq R_{tst} r = R_t r$.
Thus, $R_t r$ is a left ideal of $R_{ts}$.

Using that $R$ is nearly epsilon-strongly $S$-graded,
there is an element $y \in R_s R_t$ such that $yr=r$.
We may write $y = \sum_{i=1}^k a_i b_i$ where $a_1, \ldots, a_k \in R_s$ and $b_1, \ldots, b_k \in R_t$.
Note that $c_i := b_i r \in R_t R_s \subseteq R_{ts}$ with $ts \in E(S)$.
Let $d \in R_t r$ be arbitrary. We may find some $r' \in R_t$ such that $d = r'r$.
Note that $r' a_i \in R_t R_s \subseteq R_{ts}$
and
$d = r'r = r' yr = \sum_{i=1}^k r' a_i b_i r = \sum_{i=1}^k (r' a_i) (b_i r) = \sum_{i=1}^k (r' a_i) c_i$.
This shows that $I = \sum_{i=1}^k R_{ts} c_i$.
Thus, $I$ is finitely generated.
Using that $ts \in E(S)$, $R_{ts}$ is von Neumann regular and $s$-unital by assumption.
By Proposition~\ref{prop:vNreg}, $I$ is generated by an idempotent in $R_{ts}$.
\end{proof}

\begin{theorem}\label{thm:Main}
Let $S$ be a semigroup and let $R$ be an $S$-graded ring. 
The following two assertions are equivalent:
\begin{enumerate}[{\rm (i)}]
	\item $R$ is graded von Neumann regular;
	\item $R$ is nearly epsilon-strongly $S$-graded, and $R_e$ is von Neumann regular for every $e\in E(S)$. 
\end{enumerate}
\end{theorem}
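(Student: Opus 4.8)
The implication (i)$\Rightarrow$(ii) is already done: it is precisely Lemma~\ref{lem:necessary}. So the plan is to prove (ii)$\Rightarrow$(i). Assume $R$ is nearly epsilon-strongly $S$-graded and $R_e$ is von Neumann regular for every $e\in E(S)$. Fix $s\in S$, $t\in V(s)$ and $r\in R_s$; I must produce $y\in R_t$ with $r = ryr$.

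\textbf{Step 1: Reduce to the homogeneous component $R_{ts}$ via Lemma~\ref{lemma:technical}.} Apply Lemma~\ref{lemma:technical} to get an idempotent $f\in R_{ts}$ with $R_t r = R_{ts} f$. Since $ts\in E(S)$ and $R$ is nearly epsilon-strongly graded, $R_{ts} = R_t R_s$ (using $t\in V(s)$, so $ts=(ts)(ts)$ and symmetry gives $R_{ts}=R_{ts}R_{ts}R_{ts}\subseteq R_{ts}$, and $s$-unitality of $R_tR_s$) is $s$-unital; in particular $R_{ts} f$ contains $f$, and $f\in R_tr$ means $f = y_0 r$ for some $y_0\in R_t$. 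Symmetrically, the right ideal $r R_t$ of $R_{st}$ is generated by an idempotent $f'\in R_{st}$, so $f' = r y_1$ for some $y_1\in R_t$.

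\textbf{Step 2: Use that $f$ acts as a right identity on $R_tr$, hence on $r$ up to a twist.} The key point is that $R_t r = R_{ts} f$ forces $f$ to fix every element of $R_tr$ on the right, because $f$ is idempotent and $f\in R_{ts}f$: for any $r'\in R_t$, $r'r\in R_{ts}f$, so $r'r = (r'r)$ and writing $r'r = zf$ gives $r'rf = zff = zf = r'r$. Now I want to conclude $rf = r$; this needs an element acting on $r$ from the left inside $R_t$. Here is where $s$-unitality enters: by Proposition~\ref{prop:NearlyEpsEquiv} there is $\epsilon'_{t,s}(r)\in R_tR_s = R_{ts}$ with $r\epsilon'_{t,s}(r)=r$ — wait, that is the wrong side. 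Instead, symmetric grading gives $r = \epsilon_{s,t}(r)\,r$ with $\epsilon_{s,t}(r)\in R_sR_t$; write $\epsilon_{s,t}(r) = \sum a_i b_i$ with $a_i\in R_s$, $b_i\in R_t$, so $r = \sum a_i(b_i r)$ with each $b_i r\in R_t r = R_{ts}f$. Hence $rf = \sum a_i (b_i r) f = \sum a_i(b_i r) = r$. Symmetrically, using the left $s$-unitality witness for $r$ inside $R_sR_t$ applied on the other side, $f' r = r$.

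\textbf{Step 3: Assemble $y$.} From $f = y_0 r$ and $rf = r$ we get $r = rf = r y_0 r$. Since $y_0\in R_t$, setting $y := y_0$ finishes the argument — one does not even need $f'$. Actually, only the right-hand reduction $R_tr = R_{ts}f$ and the identity $rf=r$ are needed; the second (right-ideal) reduction is redundant. I would state Step 1 using only the left ideal $R_tr$ and cite Lemma~\ref{lemma:technical} directly, then spend the bulk of the proof on Step 2.

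\textbf{Main obstacle.} The only subtle point is Step 2: deducing $rf = r$ from the fact that $f$ right-fixes the homogeneous left ideal $R_tr$. Naively $r\notin R_tr$ since $r\in R_s$ not $R_{ts}$, so one cannot plug $r$ in directly. The fix is exactly the symmetric-grading decomposition $r = \sum a_i(b_ir)$ with $b_i r\in R_t r$, pushing the factor $f$ past the "outer" $a_i\in R_s$. I should make sure the $\epsilon_{s,t}(r)$ supplied by Proposition~\ref{prop:NearlyEpsEquiv} is the left witness $\epsilon_{s,t}(r)r = r$ with $\epsilon_{s,t}(r)\in R_sR_t$ — which it is — so this decomposition is legitimate. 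Everything else is routine bookkeeping with the grading $R_uR_v\subseteq R_{uv}$ and the relations $sts=s$, $tst=t$.
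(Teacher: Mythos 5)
Your proof is correct and takes essentially the same route as the paper's: Lemma~\ref{lemma:technical} supplies the idempotent generator $f$ of $R_t r$, the witness $\epsilon_{s,t}(r)\in R_sR_t$ with $\epsilon_{s,t}(r)r=r$ is used to push $f$ past $r$ on the right (the paper phrases this as $r\in R_sR_tr\subseteq R_s f$, hence $rf=r$), and $r=r(y_0r)$ with $y_0\in R_t$ concludes. One harmless slip: your parenthetical claim that $R_{ts}=R_tR_s$ is neither justified by near epsilon-strongness nor needed, since $f=f^2\in R_{ts}f=R_tr$ already because $f$ is an idempotent of $R_{ts}$.
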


\begin{proof}
(i)$\Rightarrow$(ii):
This follows immediately from Lemma~\ref{lem:necessary}.

(ii)$\Rightarrow$(i):
Suppose that (ii) holds.
Take $s\in S$, $t\in V(s)$ and $r \in R_s$.
We need to show that there is some $y \in R_t$, such that $r=ryr$.
By Lemma~\ref{lemma:technical} there is an idempotent $u \in R_{ts}$ such that $R_{t} r = R_{ts} u$.
Note that $u=u^2 \in R_{ts} u = R_t r$.
Thus, we may find some $r' \in R_t$ such that $u=r'r$.
Moreover,
\begin{displaymath}
	R_s R_t r = R_s (R_t r) = R_s (R_{ts} u) \subseteq R_{sts} u = R_s u.
\end{displaymath}
Using that $R$ is nearly epsilon-strongly $S$-graded, there is some $\epsilon_{s,t}(r) \in R_s R_t$
such that $\epsilon_{s,t}(r) r = r$.
Thus, from the above equation we get that
$r = \epsilon_{s,t}(r) r \in R_s R_t r \subseteq R_s R_{ts} u \subseteq R_{sts} u = R_s u$.
Hence, there is some $r'' \in R_s$ such that $r=r'' u$.
We get that
$ru = (r''u)u = r'' (u^2) = r'' u = r$.
Thus,
$r = ru = r (r' r)$.
This shows that $R$ is graded von Neumann regular.
\end{proof}

\begin{cor}\label{Cor:AssumeEpsilonStrong}
Let $S$ be a semigroup and let $R$
be an epsilon-strongly $S$-graded ring.
Then $R$ is graded von Neumann regular if and only if
$R_e$ is von Neumann regular for every $e\in E(S)$.
\end{cor}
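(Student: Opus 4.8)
The plan is to deduce this directly from Theorem~\ref{thm:Main}. The only thing that genuinely needs to be checked is that an epsilon-strongly $S$-graded ring is automatically nearly epsilon-strongly $S$-graded; once that is established, the equivalence in Theorem~\ref{thm:Main} specializes to the asserted statement, because the ``nearly epsilon-strongly $S$-graded'' clause in part~(ii) of that theorem becomes redundant.

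To see that epsilon-strong implies nearly epsilon-strong, I would simply unwind the definitions. Both notions demand that the grading be symmetric, so that half of the hypothesis is already in place. It then remains to observe that, for all $s\in S$ and $t\in V(s)$, the ring $R_s R_t$ is $s$-unital: by the definition of an epsilon-strong grading it is unital, and if $u$ denotes its unity element then $ux = x = xu$ for every $x\in R_s R_t$, so in particular $x\in R_s R_t\, x$ and $x\in x\, R_s R_t$. Hence $R$ is nearly epsilon-strongly $S$-graded. (Alternatively, one may invoke Proposition~\ref{prop:NearlyEpsEquiv} with the constant choices $\epsilon_{s,t}(r) := \epsilon_{s,t}$ and $\epsilon_{t,s}'(r) := \epsilon_{t,s}'$ furnished by Proposition~\ref{prop:EpsStrEquiv}.)

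With this in hand I would apply Theorem~\ref{thm:Main}. If $R$ is graded von Neumann regular, then by that theorem $R_e$ is von Neumann regular for every $e\in E(S)$. Conversely, if $R_e$ is von Neumann regular for every $e\in E(S)$, then, since $R$ is moreover nearly epsilon-strongly $S$-graded, condition~(ii) of Theorem~\ref{thm:Main} is met, so $R$ is graded von Neumann regular. There is no real obstacle here: the entire content of the corollary is the elementary remark that a unital ring is $s$-unital, after which Theorem~\ref{thm:Main} does all the work.
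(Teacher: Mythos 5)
Your proof is correct and matches the paper's intended argument: the paper states this corollary without proof, as an immediate consequence of Theorem~\ref{thm:Main} once one notes that a unital ring is $s$-unital, so every epsilon-strongly $S$-graded ring is nearly epsilon-strongly $S$-graded. Your write-up supplies exactly that observation and nothing more is needed.
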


\begin{remark}
Let $R$ be a strongly $S$-graded ring.
Note that $R$ is nearly epsilon-strongly $S$-graded if and only if $R_e$ is $s$-unital for every $e\in E(S)$.
\end{remark}

\begin{cor}\label{cor:strong}
Let $S$ be a semigroup and let $R$
be a strongly $S$-graded ring for which $R_e$ is $s$-unital for every $e\in E(S)$.
Then $R$ is graded von Neumann regular if and only if
$R_e$ is von Neumann regular for every $e\in E(S)$.
\end{cor}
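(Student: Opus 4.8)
The plan is to deduce Corollary~\ref{cor:strong} from Theorem~\ref{thm:Main} together with the remark immediately preceding the statement. Theorem~\ref{thm:Main} tells us that, for \emph{any} $S$-graded ring $R$, being graded von Neumann regular is equivalent to the conjunction of two conditions: (a) $R$ is nearly epsilon-strongly $S$-graded, and (b) $R_e$ is von Neumann regular for every $e \in E(S)$. So the task reduces to showing that, under the standing hypotheses (strongly $S$-graded, and $R_e$ is $s$-unital for all $e \in E(S)$), condition (a) is automatic. Once that is established, the equivalence in the corollary is exactly the equivalence ``graded von Neumann regular $\iff$ (b)'' obtained by stripping the now-redundant condition (a) from Theorem~\ref{thm:Main}.

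Concretely, I would argue as follows. Assume $R$ is strongly $S$-graded, so $R_s R_t = R_{st}$ for all $s,t \in S$; in particular $R_s R_t R_s = R_{sts} = R_s$ whenever $t \in V(s)$, which gives that $R$ is symmetrically $S$-graded outright. It remains to check that $R_s R_t$ is $s$-unital for all $s \in S$ and $t \in V(s)$. But $R_s R_t = R_{st}$ and $st \in E(S)$ (since $stst = st$ because $t \in V(s)$), so $R_s R_t = R_{st} = R_e$ for the idempotent $e = st$, which is $s$-unital by hypothesis. Hence $R$ is nearly epsilon-strongly $S$-graded. This is precisely the content of the preceding remark, so in the write-up I would simply invoke that remark rather than repeat the computation.

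With condition (a) secured, the proof concludes in one line: by the remark, the hypothesis that $R_e$ is $s$-unital for every $e \in E(S)$ makes $R$ nearly epsilon-strongly $S$-graded, so Theorem~\ref{thm:Main}(ii) holds if and only if $R_e$ is von Neumann regular for every $e \in E(S)$; and Theorem~\ref{thm:Main}(i)$\iff$(ii) then yields that $R$ is graded von Neumann regular if and only if $R_e$ is von Neumann regular for every $e \in E(S)$.

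I do not anticipate a genuine obstacle here — the corollary is a direct specialization. The only point requiring a moment's care is the observation that $st$ is idempotent when $t \in V(s)$, so that the $s$-unitality hypothesis on the homogeneous components indexed by idempotents actually applies to the products $R_s R_t$; this is immediate from $st\,st = s(tst) = st$ using $tst = t$. Everything else is a transparent appeal to Theorem~\ref{thm:Main} and the preceding remark, both of which may be assumed.
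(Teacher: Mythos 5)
Your proposal is correct and follows exactly the route the paper intends: the corollary is stated without proof precisely because it is the combination of the preceding remark (strong grading plus $s$-unital idempotent components implies nearly epsilon-strong) with Theorem~\ref{thm:Main}. Your verification that $st\in E(S)$ and $R_sR_tR_s=R_{sts}=R_s$ for $t\in V(s)$ correctly fills in the content of that remark.
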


Recall that a semigroup $S$ is said to be an \emph{inverse semigroup} if, for every $s\in S$, the set $V(s)$ contains exactly one element.

\begin{theorem}\label{thm:InvSemiGroup}
Let $S$ be an inverse semigroup and let $R$ be an $S$-graded ring.
The following three assertions are equivalent:
\begin{enumerate}[{\rm (i)}]
	\item $R$ is graded von Neumann regular;
	\item For all $s\in S$ and $r \in R_s$,
there exist $t \in V(s)$ and $y\in R_t$
such that
$r = ryr$;
	\item $R$ is nearly epsilon-strongly $S$-graded, and $R_e$ is von Neumann regular for every $e\in E(S)$. 
\end{enumerate}
\end{theorem}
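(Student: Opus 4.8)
The plan is to deduce all three equivalences almost immediately from results already in hand, since the inverse semigroup hypothesis makes assertions (i) and (ii) essentially coincide. First I would note that an inverse semigroup is in particular a semigroup, so Theorem~\ref{thm:Main} applies verbatim and yields the equivalence (i)$\Leftrightarrow$(iii). Hence the only real work is to link assertion (ii) to one of the other two, and (i) is the natural target.

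For the implication (i)$\Rightarrow$(ii): given $s\in S$ and $r\in R_s$, regularity of $S$ guarantees that $V(s)$ is nonempty, so I may choose any $t\in V(s)$ and apply (i) to get $y\in R_t$ with $r=ryr$. For the converse (ii)$\Rightarrow$(i), the decisive point is the characterization of inverse semigroups recalled just before the statement: $V(s)$ contains exactly one element for every $s\in S$. So fix $s\in S$, $r\in R_s$ and $t\in V(s)$; by (ii) there are $t'\in V(s)$ and $y\in R_{t'}$ with $r=ryr$, and since $V(s)=\{t\}$ we must have $t'=t$, so $y\in R_t$, which is precisely what Definition~\ref{def:grVNsemigroup} demands. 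This shows (i)$\Leftrightarrow$(ii), and combined with (i)$\Leftrightarrow$(iii) completes the proof.

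I do not anticipate any genuine obstacle here; the theorem is a corollary of Theorem~\ref{thm:Main} together with the elementary observation that the singleton nature of $V(s)$ collapses the ``for all $t\in V(s)$'' quantifier in Definition~\ref{def:grVNsemigroup} to the ``there exists $t\in V(s)$'' quantifier in (ii). The only thing to be careful about is to invoke the inverse-semigroup property at the right moment and not to conflate it with the weaker regularity used to obtain a witness $t$ in the first place.
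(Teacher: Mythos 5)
Your proposal is correct and follows exactly the paper's own argument: (i)$\Leftrightarrow$(iii) is quoted from Theorem~\ref{thm:Main}, and (i)$\Leftrightarrow$(ii) is the observation that $|V(s)|=1$ collapses the two quantifiers. You merely spell out the quantifier argument in more detail than the paper does.
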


\begin{proof}
(i)$\Leftrightarrow$(iii): This follows from Theorem~\ref{thm:Main}.

(i)$\Leftrightarrow$(ii): This follows since $\lvert V(s) \lvert =1$ for every $s\in S$. 
\end{proof}

\section{Applications}\label{Sec:Examples}

In this section we present several classes of examples to which our main results can be applied.

\subsection{Semigroup rings}

Let $A$ be an $s$-unital ring and let $S$ be a semigroup. The \emph{semigroup ring} $A[S]$ consists of all finite sums of the form $\sum_{s \in S} a_s \delta_s$, where $a_s \in A$ and $\delta_s$ is a formal symbol for every $s\in S$.
Addition is defined in the natural way and multiplication is given by extending the rule
$a \delta_s a' \delta_{s'} := aa' \delta_{ss'}$ for $a,a'\in A$ and $s,s'\in S$.
Note that $A[S]$ is strongly $S$-graded with the canonical grading defined by $(A[S])_s := A\delta_s$, for $s\in S$.
By Corollary~\ref{cor:strong}, we get the following result.

\begin{proposition}
Let $A$ be an $s$-unital ring and let $S$ be a semigroup containing at least one idempotent.
Equip the semigroup ring $A[S]$ with the canonical $S$-grading.
Then $A[S]$ is graded von Neumann regular if and only if $A$ is von Neumann regular.
\end{proposition}

\subsection{Matrix rings}

\begin{example}[{cf.~\cite[Sect.~3.9]{Kelarev2002}}]
Let $A$ be a unital ring and consider the full matrix ring $M_n(A)$ for some arbitrary $n > 0$. For $i, j \in \{1, \dots, n \}$, let $e_{i,j}$ denote the standard matrix unit, i.e. the matrix with $1_A$ in position $(i,j)$ and zeros everywhere else. Moreover, note that $B_n := \{ 0 \} \cup \{ e_{i,j} \mid 1 \leq i \leq n, 1 \leq j \leq n \}$ is a semigroup under matrix multiplication. By putting $(M_n(A))_0 := \{0\}$ and $(M_n(A))_s := As$ for $s \in B_n \setminus \{0\}$, we get a $B_n$-grading on $M_n(A)$.
\end{example}

\begin{example}
Let $A$ be a unital ring, put $n=3$ and consider $M_3(A)$ with its $B_3$-grading defined in the previous example. Note that $B_3 = \{ 0, e_{1,1}, e_{2,2}, e_{3,3}, e_{1,2}, e_{1,3}, e_{2,1}, e_{2,3}, e_{3,1}, e_{3,2} \}.$
We note that the above $B_3$-grading on $M_3(A)$ is epsilon-strong.
Using Corollary~\ref{Cor:AssumeEpsilonStrong},
we conclude that
$M_3(A)$ is graded von Neumann regular (with respect to the $B_3$-grading) if and only if $A$ is von Neumann regular.
\end{example}

Following \cite{nastasescu2004methods}, we shall say that a semigroup grading on a matrix ring $M_n(A)$ is \emph{good}
if every standard matrix unit $e_{i,j}$ is homogeneous, i.e. if for all $i,j \in \{1,\ldots,n\}$ there is some $s\in S$
such that $e_{i,j} \in (M_n(A))_s$.

\begin{lemma}\label{lem:oppositedegree}
Let $A$ be a unital ring, let $n$ be a positive integer and let $S$ be an inverse semigroup defining a good grading on $M_n(A)$.
If $\text{deg}(e_{i,j})=s$, then $\text{deg}(e_{j,i})=s^{-1}$.
\end{lemma}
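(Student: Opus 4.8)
The plan is to use the two multiplicative facts that a good grading forces on the matrix units, namely that products of matrix units are again matrix units (up to a scalar) and that each $e_{i,i}$ is idempotent, hence its degree lies in $E(S)$. First I would fix $i,j$ with $\text{deg}(e_{i,j}) = s$ and write $\text{deg}(e_{j,i}) = t$ for the (well-defined) degree of $e_{j,i}$. Computing $e_{i,j} e_{j,i} = e_{i,i}$ shows $e_{i,i} \in (M_n(A))_{st}$, so $st \in E(S)$; likewise $e_{j,i} e_{i,j} = e_{j,j}$ gives $ts \in E(S)$. Next, the key step: from $e_{i,j} e_{j,i} e_{i,j} = e_{i,j}$ and $e_{j,i} e_{i,j} e_{j,i} = e_{j,i}$ one reads off that the homogeneous component $(M_n(A))_{sts}$ contains $e_{i,j}$, and similarly $(M_n(A))_{tst} \ni e_{j,i}$. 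Since $A$ is unital, $e_{i,j} \neq 0$, so these components are nonzero; as the grading decomposition is a direct sum, a nonzero homogeneous element determines its degree uniquely among those $u \in S$ with $(M_n(A))_u \neq 0$. Hence $sts = s$ and $tst = t$, i.e. $t \in V(s)$.

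Finally, I would invoke the hypothesis that $S$ is an inverse semigroup: by definition $V(s)$ has exactly one element, namely $s^{-1}$, and we have just shown $t \in V(s)$, so $t = s^{-1}$, which is precisely the claim $\text{deg}(e_{j,i}) = s^{-1}$.

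The only genuinely delicate point is making sure that each product $e_{a,b} e_{c,d}$ is \emph{homogeneous} so that degrees can be compared at all: this is automatic because $e_{a,b} e_{c,d}$ equals either $e_{a,d}$ (when $b = c$) or $0$, both of which are homogeneous by the definition of a good grading, and one must note that $e_{a,d} \neq 0$ since $1_A \neq 0$ in a unital ring. I expect the main obstacle — really more of a bookkeeping subtlety than a difficulty — to be the careful use of the directness of the sum $\bigoplus_{u \in S}(M_n(A))_u$ to justify ``a nonzero homogeneous element has a unique degree'', after which the identities $sts = s$, $tst = t$ drop out immediately and the inverse-semigroup hypothesis closes the argument.
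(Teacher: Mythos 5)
Your proof is correct and follows essentially the same route as the paper: write $\deg(e_{j,i})=t$, use the identities $e_{i,j}e_{j,i}e_{i,j}=e_{i,j}$ and $e_{j,i}e_{i,j}e_{j,i}=e_{j,i}$ together with directness of the grading to conclude $sts=s$ and $tst=t$, hence $t\in V(s)=\{s^{-1}\}$. The extra care you take about nonzero homogeneous elements having a unique degree is exactly the (implicit) justification in the paper's argument.
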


\begin{proof}
Suppose that $\text{deg}(e_{i,j})=s$. Using that the grading is good, there is some $t\in S$ such that $\text{deg}(e_{j,i})=t$.
By inspecting the equations $e_{i,j} e_{j,i} e_{i,j} = e_{i,j}$ and $e_{j,i} = e_{j,i} e_{i,j} e_{j,i}$
we conclude that $sts=s$ and $t=tst$, i.e. $t=s^{-1}$.
\end{proof}

\begin{proposition}
Let $A$ be a unital ring, let $n$ be a positive integer and let $S$ be an inverse semigroup defining a good grading on $M_n(A)$.
Furthermore, suppose that for every $e \in E(S)$,
we have either $(M_n(A))_e = \{0\}$
or that there are
 integers $i_1,i_2,\ldots, i_k \in \{1,\ldots,n\}$
 such that $(M_n(A))_e = A e_{i_1,i_1} + A e_{i_2,i_2} + \ldots + A e_{i_k,i_k}$.
Then $M_n(A)$ is graded von Neumann regular if and only if $A$ is von Neumann regular.
\end{proposition}

\begin{proof}
We begin by showing that $R:=M_n(A)$ is epsilon-strongly $S$-graded
using Proposition~\ref{prop:EpsStrEquiv}.
Take $s\in S$. If $R_s = \{0\}$, then there is nothing to check. Otherwise, using that the grading is good, we may write
$R_s = A e_{i_1,j_1} + A e_{i_2,j_2} + \ldots + A e_{i_k,j_k}$ for some $i_1,\ldots,i_k,j_1,\ldots,j_k$.
Supported by Lemma~\ref{lem:oppositedegree},
we may put
$\epsilon_{s,s^{-1}} := e_{i_1,i_1} + e_{i_2,i_2} + \ldots + e_{i_k,i_k} \in R_s R_{s^{-1}}$ and note that $\epsilon_{s,s^{-1}} r = r$ for every $r\in R_s$.
One can show that $R_s R_{s^{-1}} R_s = R_s$
in the same way as in the proof of Proposition~\ref{prop:EpsStrEquiv}.
Also note that $r'\epsilon_{s,s^{-1}}=r'$ for every $r' \in R_{s^{-1}}$.

The ''only if'' statement follows from Corollary~\ref{Cor:AssumeEpsilonStrong}
and the fact that $A e_{1,1}$ is of idempotent degree.
Indeed, if we suppose that $\text{deg}(e_{1,1})=s$ and inspect the equation
$e_{1,1}e_{1,1} = e_{1,1}$, then we conclude that 
$R_{s^2} \cap R_s \neq \{0\}$, which implies $s^2=s$.

Now, we show the ''if'' statement. Suppose that $A$ is von Neumann regular. Take $e\in E(S)$. By assumption, $R_e=\{0\}$ or $R_e = A e_{i_1,i_1} + A e_{i_2,i_2} + \ldots + A e_{i_k,i_k}$ and it is readily verified that $R_e$ is von Neumann regular.
The desired conclusion now follows from Corollary~\ref{Cor:AssumeEpsilonStrong}.
\end{proof}

\subsection{Groupoid graded rings}

Throughout this section, we let $G$ denote an arbitrary groupoid, i.e. a small category in which every morphism is invertible.
The family of objects and morphisms of $G$ will be denoted by $\Ob(G)$ and $\Mor(G)$, respectively. As usual one identifies an object $e$ with the identity morphism $\text{Id}_e$, and hence $\Ob(G) \subseteq \Mor(G)$. 
To ease notation, we will write $G$ instead of $\text{mor}(G)$.
If $g \in G$, then the domain and codomain of $g$ will be denoted by $d(g)$ and $c(g)$, respectively. We let $G^{(2)}$ denote the set of all pairs $(g, h) \in G \times G$ that are composable, i.e. such that $d(g)=c(h)$.

Recall that a ring $R$ is said to be \emph{$G$-graded}
if there are additive subgroups $\{R_g\}_{g\in G}$ of $R$
such that
$R = \oplus_{g\in G} R_g$
and, for $g,h\in G$, we have
$R_g R_h \subseteq R_{gh}$ if $(g,h)\in G^{(2)}$,
and $R_g R_h = \{0\}$ otherwise.

\begin{definition}
Let $R$ be a \emph{$G$-graded} ring.
\begin{enumerate}[{\rm (i)}]
	\item If $R_g = R_g R_{g^{-1}} R_g$ for every $g\in G$, then $R$ is said to be \emph{symmetrically $G$-graded}.
	
	\item If $R$ is symmetrically $G$-graded, and 
	$R_g R_{g^{-1}}$ is unital
	for every $g\in G$, then $R$ is said to be \emph{epsilon-strongly $G$-graded}.
	
	\item If $R$ is symmetrically $G$-graded, and 
	$R_g R_{g^{-1}}$ is $s$-unital 
	for every $g\in G$, then $R$ is said to be \emph{nearly epsilon-strongly $G$-graded}.
\end{enumerate}
\end{definition}

\begin{remark}
It is not difficult to see that (ii) above
is equivalent (cf. \cite[Proposition 37]{NOP2020})
to the definition of an epsilon-strongly groupoid graded ring introduced in \cite[Definition 34]{NOP2020}.
\end{remark}

For groupoid graded rings it is suitable to make the following definition (cf. \cite{nastasescu1982} and Definition~\ref{def:grVNsemigroup}).

\begin{definition}\label{def:vNregularGroupoid}
Let $G$ be a groupoid.
A $G$-graded ring $R$ is said to be \emph{graded von Neumann regular}
if for all $g\in G$ and $r \in R_g$,
the relation $r \in rRr$ holds.
\end{definition}

\begin{remark}\label{rem:grVNregGroupoid}
Note that a $G$-graded ring $R$ is graded von Neumann regular
if and only if for all $g\in G$ and $r\in R_g$,
there is some $y\in R_{g^{-1}}$ such that $r=ryr$.
\end{remark}

The proof of the following result follows that of Proposition~\ref{prop:NearlyEpsEquiv} closely and is therefore omitted.

\begin{proposition}\label{prop:NearlyEpsEquivGroupoid}
Let $G$ be a groupoid and let $R$ be a $G$-graded ring.
The following two assertions are equivalent:
\begin{enumerate}[{\rm (i)}]
	\item $R$ is nearly epsilon-strongly $G$-graded;
	\item For all $g\in G$ and $r\in R_g$, there exist $\epsilon_{g,g^{-1}}(r) \in R_g R_{g^{-1}}$ and $\epsilon_{g^{-1},g}'(r) \in R_{g^{-1}} R_g$ such that the equalities
$\epsilon_{g,g^{-1}}(r) r = r = r \epsilon_{g^{-1},g}'(r)$ hold.
\end{enumerate}
\end{proposition}

We will now associate a certain inverse semigroup $S(G)$ with the groupoid $G$.
Indeed, put $S(G) := G \cup \{0_S\}$
and define a binary operation $*_{S(G)}$ on $S(G)$ as follows.
For all $g,h \in G$ we define:
\begin{itemize}
	\item $g*_{S(G)} h := gh$ if $(g,h) \in G^{(2)}$,
	\item $g*_{S(G)} h := 0_S$ if $(g,h) \notin G^{(2)}$,
	\item $g*_{S(G)} 0_S := 0_S$,
$0_S *_{S(G)} h:= 0_S$, and
$0_S *_{S(G)} 0_S:= 0_S$.
\end{itemize}

It is readily verified that $S(G)$ is an inverse semigroup.
If $R$ is a $G$-graded ring, then we may endow $R$ with an $S(G)$-grading by keeping 
$R_s$ for every $s\in G$ and simply adding the component $R_{0_S} := \{0\}$.

The proof of the following result is straightforward.

\begin{proposition}\label{prop:SwitchGradings}
Let $G$ be a groupoid and let $R$ be a $G$-graded ring.
Simultaneously consider $R$ as an $S(G)$-graded ring, where $S(G)$ is the semigroup associated with $G$.
Then $R$ is epsilon-strongly $G$-graded (resp. nearly epsilon-strongly $G$-graded)
if and only if $R$ is epsilon-strongly $S(G)$-graded (resp. nearly epsilon-strongly $S(G)$-graded).
\end{proposition}

The following result generalizes
\cite[Theorem~1.2]{Lannstrom2020}
and
\cite[Proposition~3.5]{BagioPaquesPinedo}.

\begin{theorem}\label{thm:Groupoid}
Let $G$ be a groupoid and let $R$ be a $G$-graded ring.
The following three assertions are equivalent:
\begin{enumerate}[{\rm (i)}]
	\item $R$ is graded von Neumann regular;
	\item For all $g\in G$ and $r \in R_g$,
there exists $y\in R_{g^{-1}}$
such that
$r = ryr$;
	\item $R$ is nearly epsilon-strongly $G$-graded, and $R_{e}$ is von Neumann regular for every $e\in \Ob(G)$. 
\end{enumerate}
\end{theorem}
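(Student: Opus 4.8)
The plan is to reduce Theorem~\ref{thm:Groupoid} to the already-established inverse semigroup case, Theorem~\ref{thm:InvSemiGroup}, via the associated inverse semigroup $S(G)$. First I would dispose of the equivalence (i)$\Leftrightarrow$(ii): this is immediate from Remark~\ref{rem:grVNregGroupoid}, since for a groupoid the only $t$ with $r = ryr$ possibly nonzero has $y \in R_{g^{-1}}$, so the two formulations literally coincide (note that in $S(G)$ we have $V(g) = \{g^{-1}\}$, and the $0_S$-component is trivial, so there is no loss in passing between the $G$-grading and the $S(G)$-grading).

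The heart of the argument is (ii)$\Leftrightarrow$(iii). I would view $R$ simultaneously as an $S(G)$-graded ring by adjoining $R_{0_S} := \{0\}$, as described in the excerpt. Since $S(G)$ is an inverse semigroup, Theorem~\ref{thm:InvSemiGroup} tells us that $R$ is graded von Neumann regular as an $S(G)$-graded ring if and only if $R$ is nearly epsilon-strongly $S(G)$-graded and $R_f$ is von Neumann regular for every $f \in E(S(G))$. So the remaining task is to match each ingredient of the $S(G)$-statement with the corresponding $G$-statement: (a) the condition in (ii) is exactly the condition that $R$ be graded von Neumann regular as an $S(G)$-graded ring, because for $g \in G$ one has $V(g) = \{g^{-1}\}$ in $S(G)$, and for the idempotent $0_S$ the component $R_{0_S} = \{0\}$ makes the regularity condition vacuous; (b) by Proposition~\ref{prop:SwitchGradings}, $R$ is nearly epsilon-strongly $S(G)$-graded if and only if it is nearly epsilon-strongly $G$-graded; (c) the idempotent set $E(S(G))$ consists of $0_S$ together with the identity morphisms $\text{Id}_e$ for $e \in \text{ob}(G)$, and $R_{0_S} = \{0\}$ is trivially von Neumann regular, so ``$R_f$ von Neumann regular for all $f \in E(S(G))$'' is equivalent to ``$R_e$ von Neumann regular for all $e \in \text{ob}(G)$''. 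Chaining these three observations through Theorem~\ref{thm:InvSemiGroup} yields (ii)$\Leftrightarrow$(iii).

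I expect the only real subtlety to be bookkeeping around the adjoined zero element $0_S$: one must check that the grading $R = \bigoplus_{s \in S(G)} R_s$ with $R_{0_S} = \{0\}$ genuinely satisfies $R_s R_t \subseteq R_{s *_{S(G)} t}$ in all cases, in particular that the groupoid convention $R_g R_h = \{0\}$ when $(g,h) \notin G^{(2)}$ corresponds to $R_g R_h \subseteq R_{0_S} = \{0\}$, so that the two notions of grading really are the same object. This is routine and is essentially the content of the remark preceding Proposition~\ref{prop:SwitchGradings}. Everything else is a direct transcription of definitions, so the proof will be short: state the equivalence (i)$\Leftrightarrow$(ii) from Remark~\ref{rem:grVNregGroupoid}, then invoke Proposition~\ref{prop:SwitchGradings} and Theorem~\ref{thm:InvSemiGroup} together with the identification $E(S(G)) = \{0_S\} \cup \text{ob}(G)$ and $R_{0_S} = \{0\}$ to obtain (ii)$\Leftrightarrow$(iii).
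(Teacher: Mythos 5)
Your proposal is correct and follows exactly the paper's route: the paper proves this theorem by combining Remark~\ref{rem:grVNregGroupoid}, Proposition~\ref{prop:SwitchGradings} and Theorem~\ref{thm:InvSemiGroup}, which is precisely the reduction you describe. Your additional bookkeeping about $E(S(G))=\{0_S\}\cup\mathrm{ob}(G)$ and the triviality of the $0_S$-component is exactly the (omitted) content behind the paper's one-line proof.
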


\begin{proof}
The proof follows by combining
Remark~\ref{rem:grVNregGroupoid},
Proposition~\ref{prop:SwitchGradings}
and
Theorem~\ref{thm:InvSemiGroup}.
\end{proof}

\end{document}